\title{Modeling of a heat equation with a Dirac density}
\author{Scott W Hansen
	\thanks{Department of Mathematics, Iowa State University, Ames, IA 50010, USA 		(shansen@iastate.edu) (jesusmtz@iastate.edu). 
	Funding for this research was provided in part by the National Science 
	Foundation under award number 
	DMS-1312952.}
	 ~and ~Jose de Jesus Martinez
	 \footnotemark[1]}
\date{}
\newtheorem{thm}{Theorem}\numberwithin{thm}{subsection}
\numberwithin{cor}{subsection}
\newtheorem{lemma}{Lemma}\numberwithin{lemma}{subsection}
\newtheorem{prop}{Proposition}\numberwithin{prop}{subsection}
\theoremstyle{definition}
\newtheorem{rmk}{Remark}\numberwithin{rmk}{subsection}
\newcommand{\eq}[1]{\begin{align*}#1\end{align*}}
\newcommand{\eql}[2]{\begin{align}\label{#1}#2 \end{align}}
\def\<#1>{\langle#1\rangle}
\newcommand{\R}{\mathbb{R}}
\newcommand{\C}{\mathbb{C}}
\newcommand{\A}{\mathcal{A}}
\newcommand{\hi}{\mathcal{H}}
\newcommand{\W}{\mathcal{W}}
\newcommand{\sector}{{\bf S}}
\newcommand{\ep}{\epsilon}
\newcommand{\dx}{~dx}
\newcommand{\dt}{~dt}
\newcommand{\weakstar}{~\stackrel{*}{\rightharpoonup}~}
\newcommand{\oep}[1]{\omega_{\epsilon, #1}}
\newcommand{\oepm}{\omega_\ep}
\newcommand{\e}[1]{{ #1}_{\ep}}
\newcommand{\Ae}{{\A_{\ep}}}
\newcommand{\hie}{\mathcal{H}_\ep}
\newcommand{\ye}{{y_\ep}}
\newcommand{\yeo}{{y^0_\ep}}
\newcommand{\ueo}{{u^0_\ep}}
\newcommand{\veo}{{v^0_\ep}}
\newcommand{\zeo}{{z^0_\ep}}
\newcommand{\ue}{{u_\ep}}
\newcommand{\ve}{{v_\ep}}
\newcommand{\ze}{{z_\ep}}
\begin{document}
\maketitle

\begin{abstract} We consider a linear hybrid system consisting of two rods connected by a thin wall of width $2\epsilon$ and density $1/2\epsilon$. 
By passing to a limit, we obtain a system describing heat flow of two rods connected by a ``point mass'' whose dynamics are governed by a differential equation. 
We prove that solutions of the {\it epsilon} problem converge weakly to solutions of the limiting system.
\end{abstract}

\section{INTRODUCTION}
Consider a linear hybrid system consisting of two wires or rods connected by a thin wall of width $2\epsilon>0$ and density $1/2\epsilon$. 
Assume the two rods occupy the intervals $\oep{1} = (-L_1,-\ep)$ and $\oep{2}= (\ep,L_2)$, and the wall occupies the interval $\oepm = (-\ep,\ep)$.
Correspondingly, let $\ue=\ue(t,x)$, $\ve=\ve(t,x)$ and $\ze=\ze(t,x)$ denote the temperature distribution on their respective domains $\oep{1}$, $\oep{2}$, and $\oepm$.
We suppose the temperature of the rods and wall satisfy the heat equation on their respective domains with Dirichlet boundary conditions at endpoints $x=-L_1,L_2$.
The linear equation modeling heat flow of such a system is as follows:
\eql{approx sys}{
&\begin{cases}
	c_1 \rho_1\dot{\ue} - k_1\ue'' = 0, & t>0,\  x\in \oep{1}\\
	c_2 \rho_2\dot{\ve} - k_2\ve'' = 0, & t>0,\  x\in \oep{2}\\
	\frac{c}{2\ep}\dot{\ze} - k\ze'' = 0, & t>0, \ x\in \oepm\\
	\ue(t,-\ep) = \ze(t,-\ep),\ \ze(t,\ep) = \ve(t,\ep), & t>0\\
	k_1\ue'(t,-\ep) = k\ze'(t,-\ep),\ k\ze'(t,\ep) = k_2\ve'(t,\ep), & t>0\\	
	\ue(t,-L_1) = \ve(t,L_2)=0, & t>0.
\end{cases}
}
Throughout this article, $~'$ will denote spatial derivatives and $\dot{~~}$ will denote temporal derivatives. 
The parameters $c>0$ and $k>0$ in the third equation represent the specific heat and conductivity of the wall connecting the two rods.
The parameters $c_i$, $\rho_i$ and $k_i$ in \eqref{approx sys} are positive and represent the specific heat, density and thermal conductivity of the rod on the subdomain $\oep{i}$.
It will later be convenient to use the diffusivity coefficient $\alpha_i^2=k_i/c_i\rho_i$ for $i=1,2$.
The fourth equation guarantees continuity of the temperature across the interface $x=\pm\ep$ and the fifth equation represents the heat flux continuity condition at the interfaces (see \cite[Chapter 8]{Ozisik} ). 
We complete the system by adding the initial conditions 
\eql{approx init data}{
	\{\ueo(x), \veo(x), \zeo(x)\}=\{\ue(0,x), \ve(0,x), \ze(0,x)\}
}
in an appropriately defined function space at time $t=0$ so we may determine the solution of \eqref{approx sys} uniquely. 

We show in this article that with appropriate assumptions on the initial conditions, the solution $\{\ue,\ve,\ze\}$ of \eqref{approx sys} with \eqref{approx init data} converges in a weak sense to the solution of the following limiting hybrid system:
\eql{limit sys}{
&\begin{cases}
	c_1 \rho_1 \dot{u} - k_1u'' = 0, & t>0, \ x\in \omega_1:= \omega_{0,1} \\
	c_2 \rho_2 \dot{v} - k_2v'' = 0, & t>0, \ x\in \omega_2:= \omega_{0,2} \\
	c~\dot{z} = k_2 v'(t,0) - k_1 u'(t,0), & t>0\\
	u(t,0) = v(t,0) = z(t), & t>0\\	
	u(t,-L_1) = v(t,L_2)=0, & t>0,
\end{cases}
}
with initial conditions of the form
\eql{lim init data}{
	\{u^0(x), v^0(x), z^0\}=\{u(0,x), v(0,x), z(0)\}
} 
given in an appropriately defined function space at time $t=0$.
The third equation in \eqref{limit sys} states that the rate of change in temperature of the point mass is proportional to the net heat flux into the point mass.
This can be viewed as a form of Fick's law of diffusion.

Similar hybrid systems involving strings and beams with point masses have been studied in the context of controllability and stabilization theory.
See for example \cite{HZwavepoint}, \cite{LitTay}, \cite{CZbeam}, \cite{CZeulerbeam}, \cite{LMbeam}, \cite{MoRaoCo}, \cite{CoMo}, \cite{ZhaoWeiss}, \cite{GuoIvanov} and \cite{Guo}.
In particular, C. Castro showed in \cite{Cwavepoint} that a system similar to \eqref{limit sys} with strings can be obtained from a system similar to that in \eqref{approx sys} and gave a detailed spectral analysis. 


\section{WELL-POSEDNESS}
\setcounter{subsubsection}{0}

We begin by proving well-posedness of the limit problem \eqref{limit sys}.


\subsection{The limit problem}
\label{subsec: limit problem}

Given $u$, $v$ defined on $\omega_1$, $\omega_2$ and $z\in\R$, let $y=(u,v,z)^t$, where ${}^t$ denotes transposition and define
\eq{
	\hi = L^2(\omega_1)\times L^2(\omega_2)\times\R
}
equipped with the norm
\eq{
	\|y\|^2_{\hi} = c_1\rho_1\|u\|^2_{\omega_1}+c_2\rho_2\|v\|^2_{\omega_2}+c~|z|^2
}
where $\|\cdot\|_{\omega_i}$ is the usual norm in $L^2(\omega_i)$ for $i=1,2$.
Define
\eq{
	&\vartheta_{\omega_1}=\{u\in H^1(\omega_1)~|~u(-L_1)=0\}\\
	&\vartheta_{\omega_2}=\{v\in H^1(\omega_2)~|~v(L_2)=0\}\\
	&\vartheta=\{(u,v)\in \vartheta_{\omega_1}\times\vartheta_{\omega_2}~|~u(0)=v(0)\}
}
equipped with the norms
\eq{
	\|u\|^2_{\vartheta_{\omega_i}} = k_i \|u'\|^2_{L^2(\omega_i)}, \quad 
	\|(u,v)\|^2_\vartheta = \|u\|^2_{\vartheta_{\omega_1}}+\|v\|^2_{\vartheta_{\omega_2}}
}
for $i=1,2$. 
We can check that (see \cite{HZwavepoint}) $\vartheta$ is algebraically and topologically equivalent to $H^1_0(\Omega)$, however one can think of $\vartheta$ as a subspace of $\vartheta_{\omega_1}\times\vartheta_{\omega_2}$. 
The space
\eq{
	\W = \{ (u,v,z)\in\vartheta\times\R~|~u(0)=v(0)=z\}
}
is a closed subspace of $\vartheta\times\R$ with norm we may define as $\|y\|^2_{\W} = \|(u,v)\|^2_\vartheta$.
It is easy to see that the space $\W$ is densely and continuously embedded in the space $\hi$.
Define the unbounded operator $\A:D(\A)\subset\hi\rightarrow \hi$ by
\eql{limit generator}{
\A=
\begin{pmatrix} 
	\alpha^2_1d^2&0&0\\
	0& \alpha^2_2d^2&0\\
	- \frac{k_1}{c}\delta_0d & \frac{k_2}{c}\delta_0d & 0\\
\end{pmatrix}
}
where $d$ denotes the (distributional) derivative operator and $\delta_0$ denotes the Dirac delta function with mass at $x=0$, and the domain $D(\A)$ of $\A$ is given by
\eql{domain of generator}{
	D(\A)=\{y\in \W~:~ u\in H^2(\omega_1),~v\in H^2(\omega_2)\}.
}
When $D(\A)$ is endowed with the graph-norm topology
\eq{
	\|y\|^2_{D(\A)} = \|y\|^2_{\hi}+\|\A y\|^2_{\hi}
}
it becomes a Hilbert space with continuous embedding in $\hi$. 
We can therefore write the limit system \eqref{limit sys} as
\eql{limit cauchy problem}{
	\dot{y}(t) = \A y(t), \quad y(0) = y^0,\quad t>0
}
where $y^0=(u^0,v^0,z^0)$. Since $D(\A)$ is dense in $\W$ and the latter is dense in $\hi$, it follows that $\A$ is a densely defined operator. 
\begin{lemma}\label{lem: limit A is a bijection}
The operator $\A:D(\A)\rightarrow\hi$ is a bijection.
\end{lemma}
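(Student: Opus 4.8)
The plan is to read off a variational structure from $\A$ and then invoke the Riesz representation theorem, rather than to integrate the resolvent equation by hand. The first step is the identity, valid for every $y=(u,v,z)\in D(\A)$ and every $\phi=(\phi_1,\phi_2,\phi_3)\in\W$,
\eq{
	\langle -\A y,\phi\rangle_\hi
	= k_1\int_{\omega_1}u'\phi_1'\dx + k_2\int_{\omega_2}v'\phi_2'\dx
	= \langle y,\phi\rangle_\W .
}
This is obtained by integrating by parts once on each of $\omega_1$ and $\omega_2$: one uses the relations $c_i\rho_i\alpha_i^2=k_i$ to rewrite the first two components of $\langle\A y,\phi\rangle_\hi$, the boundary conditions $\phi_1(-L_1)=\phi_2(L_2)=0$ (built into $\vartheta$) to discard the outer boundary terms, and the matching conditions $\phi_1(0)=\phi_2(0)=\phi_3$ (built into $\W$) to see that the interface terms $-k_1u'(0)\phi_3+k_2v'(0)\phi_3$ produced by the integration by parts cancel exactly against $c\,\phi_3$ times the third component of $-\A y$. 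As a by-product, $\Real\langle\A y,y\rangle_\hi=-\|y\|_\W^2\le 0$.

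Injectivity is then immediate: if $y\in D(\A)$ and $\A y=0$, taking $\phi=y$ in the identity gives $\|y\|_\W^2=0$, and since $\|\cdot\|_\W$ is a genuine norm on $\W$ (recall that $\vartheta$ is algebraically and topologically equivalent to $H^1_0(\Omega)$), we conclude $y=0$.

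For surjectivity, fix $w=(f,g,h)\in\hi$. Because $\W$ is continuously embedded in $\hi$, the map $\phi\mapsto\langle w,\phi\rangle_\hi$ is a bounded linear functional on the Hilbert space $\W$, so by the Riesz representation theorem there is a unique $y=(u,v,z)\in\W$ with $\langle y,\phi\rangle_\W=\langle w,\phi\rangle_\hi$ for all $\phi\in\W$. It remains to check that $y\in D(\A)$ and $-\A y=w$. Testing against $\phi=(\phi_1,0,0)$ with $\phi_1\in C_c^\infty(\omega_1)$ shows $\alpha_1^2u''=-f$ in the sense of distributions, whence $u\in H^2(\omega_1)$; similarly $v\in H^2(\omega_2)$, so $y\in D(\A)$. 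With this regularity, the first step applies to $y$, giving $\langle -\A y,\phi\rangle_\hi=\langle w,\phi\rangle_\hi$ for all $\phi\in\W$; the first two components of $-\A y$ are $f$ and $g$ by construction, and testing against a $\phi\in\W$ with $\phi_3\ne 0$ (e.g.\ a tent-type function on $(-L_1,L_2)$) forces the third component $\tfrac{k_1}{c}u'(0)-\tfrac{k_2}{c}v'(0)$ to equal $h$. Hence $-\A y=w$, and since $w$ was arbitrary, $\A$ is onto.

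I expect the only subtle point to be this last step, namely confirming that the weak solution produced by Riesz genuinely lies in $D(\A)$ and that its point-mass component reproduces $h$; this is where the choice of test functions (compactly supported inside a single rod to extract the heat equations, then general elements of $\W$ to extract the interface flux balance) is essential. As an alternative that bypasses the functional analysis, one can integrate the two scalar equations $\alpha_1^2u''=-f$ and $\alpha_2^2v''=-g$ with $u(-L_1)=0$, $v(L_2)=0$, which leaves the two parameters $u'(-L_1)$ and $v'(L_2)$ free; imposing $u(0)=v(0)$ (which then defines $z$) together with $\tfrac{k_1}{c}u'(0)-\tfrac{k_2}{c}v'(0)=h$ yields a $2\times 2$ linear system whose coefficient matrix has determinant $(k_2L_1+k_1L_2)/c\ne 0$, giving existence and uniqueness simultaneously.
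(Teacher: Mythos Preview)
Your proof is correct, but it follows a different route from the paper's. The paper integrates the equations $\alpha_1^2 u''=f$ and $\alpha_2^2 v''=g$ directly and writes down the unique solution explicitly: $u(x)=C_u(x+L_1)-F(x)$, $v(x)=C_v(x-L_2)-G(x)$, $z=C_z$, where $F,G$ are double integrals of $f,g$ and the constants $C_u,C_v,C_z$ are determined by the matching and flux conditions (with the denominator $k_2L_1+k_1L_2$ you also identified). This is precisely the ``alternative'' you sketch in your last paragraph. Your main argument instead recognises the bilinear form $a(y,\phi)=\langle y,\phi\rangle_\W$ behind $-\A$ and invokes Riesz on $\W$; the verification that the weak solution lies in $D(\A)$ and recovers the third component $h$ is carried out correctly. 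Your approach is cleaner for the bare bijection statement and, as you note, yields the dissipativity identity $\langle\A y,y\rangle_\hi=-\|y\|_\W^2$ for free (the content of the paper's next lemma). The paper's explicit formula, however, is not wasted effort: it is reused immediately afterwards to prove that $\A^{-1}$ is compact, by decomposing it as a Volterra-type piece $(-F,-G,0)$ plus a finite-rank piece $(C_u(x+L_1),C_v(x-L_2),C_z)$. With your approach one would instead argue compactness via the compact embedding $\W\hookrightarrow\hi$ together with the bound $\|\A^{-1}w\|_\W\le C\|w\|_\hi$ that comes out of Riesz.
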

\begin{proof}
Let $\vec{f}=(f,g,h)\in \hi$ be arbitrary. 
Then the solution to $\A y=\vec{f}$ is given by
\eql{sol to Ay is f}{
	y=
	\begin{pmatrix}
		C_u(x+L_1) - F(x)\\
		C_v(x-L_2) - G(x)\\
		C_z
	\end{pmatrix}
}
where
\eql{coeffs in sol to Ay=f}{
\begin{split}
	&F(x) = \int_{-L_1}^{x}\int_{s}^{0}\alpha_1^{-2}f(r)~dr ds, \\
	&G(x) = \int_{x}^{L_2}\int_{0}^{s}\alpha_2^{-2}g(r)~dr ds,
\end{split}
\begin{split}
	&C_u = \frac{-chL_2+k_2(F(0)-G(0))}{k_2L_1+k_1L_2}\\
	&C_v = \frac{chL_1+k_1(F(0)-G(0))}{k_2L_1+k_1L_2}\\
	&C_z = -\frac{chL_1L_2+L_2k_1F(0)+L_1k_2G(0)}{k_2L_1+k_1L_2}.
\end{split}
}
Since $(u'',v'')=(\alpha_1^{-2}f, \alpha_2^{-2}g)\in L^2(\omega_1)\times L^2(\omega_2)$ it follows that $(u,v)\in H^2(\omega_1)\times H^2(\omega_2)$. 
Furthermore, one can check from \eqref{coeffs in sol to Ay=f} that $u(0)=v(0)=z$ and $u(-L_1)=v(L_2)=0$ so that $y\in D(\A)$.
Thus $\A: D(\A)\rightarrow\hi$ is surjective. 

Finally, note that the null space of $\A$ is trivial since when $\vec{f}=(0,0,0)$ we see that $y$ is the trivial solution.
Then $\A$ is injective and hence bijective. 
\end{proof}
\begin{lemma}\label{lem: limit A symmetric and dissipative}
The operator $\A:D(\A)\rightarrow\hi$ is symmetric and dissipative.
\end{lemma}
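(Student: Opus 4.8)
The plan is to verify symmetry and dissipativity directly from the definition of $\A$ by computing $\langle \A y, \tilde y\rangle_\hi$ for $y=(u,v,z)$ and $\tilde y=(\tilde u,\tilde v,\tilde z)$ in $D(\A)$, integrating by parts on each rod. Writing out the inner product against the $\hi$-norm weights,
\eq{
	\langle \A y,\tilde y\rangle_\hi = c_1\rho_1\alpha_1^2\int_{\omega_1} u''\tilde u \dx + c_2\rho_2\alpha_2^2\int_{\omega_2} v''\tilde v \dx + c\left(\tfrac{k_2}{c}v'(0) - \tfrac{k_1}{c}u'(0)\right)\tilde z.
}
Using $c_i\rho_i\alpha_i^2 = k_i$, the rod terms become $k_i\int u''\tilde u$, and integration by parts gives $-k_i\int u'\tilde u' + k_i[u'\tilde u]$ evaluated at the endpoints. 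The Dirichlet conditions at $x=-L_1,L_2$ kill the outer boundary contributions, leaving boundary terms at $x=0$: from $\omega_1=(-L_1,0)$ I get $+k_1 u'(0)\tilde u(0)$, and from $\omega_2=(0,L_2)$ I get $-k_2 v'(0)\tilde v(0)$. Since $y,\tilde y\in\W$ we have $\tilde u(0)=\tilde v(0)=\tilde z$, so these combine with the point-mass term $k_2 v'(0)\tilde z - k_1 u'(0)\tilde z$ and cancel exactly. Hence
\eq{
	\langle \A y,\tilde y\rangle_\hi = -k_1\int_{\omega_1} u'\tilde u' \dx - k_2\int_{\omega_2} v'\tilde v' \dx = -\langle(u,v),(\tilde u,\tilde v)\rangle_\vartheta,
}
which is manifestly symmetric in $y$ and $\tilde y$; this establishes symmetry. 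Taking $\tilde y = y$ gives $\langle \A y,y\rangle_\hi = -\|(u,v)\|_\vartheta^2 \le 0$, which is dissipativity (for a real Hilbert space, or $\Real\langle \A y,y\rangle_\hi\le 0$ in the complex case).

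The only subtlety worth flagging is the interpretation of the distributional action of $\A$ on the third component: one must confirm that for $y\in D(\A)$ the expression $-\tfrac{k_1}{c}\delta_0 d\, u + \tfrac{k_2}{c}\delta_0 d\, v$ is well-defined as a real number, i.e. that $u'$ and $v'$ have meaningful traces at $x=0$. This is immediate since $u\in H^2(\omega_1)$ and $v\in H^2(\omega_2)$ imply $u',v'\in H^1$ on each rod, hence continuous up to $x=0$; so $\A y\in\hi$ is genuinely in the stated space and the computation above is justified. The main ``obstacle'' is really just bookkeeping: getting the orientation of the boundary terms at $x=0$ correct for the two intervals (note $0$ is the right endpoint of $\omega_1$ and the left endpoint of $\omega_2$, so the two boundary terms enter with opposite signs) and then matching them against the signs in the definition of the third row of $\A$. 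Once the weights $c_i\rho_i\alpha_i^2=k_i$ and the trace conditions from membership in $\W$ are used, the cancellation is automatic and both claimed properties follow simultaneously.
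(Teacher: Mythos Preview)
Your proof is correct and follows essentially the same approach as the paper's: compute $\langle \A y,\tilde y\rangle_\hi$ by integrating by parts on each rod, use the Dirichlet conditions at $-L_1,L_2$ and the trace condition $\tilde u(0)=\tilde v(0)=\tilde z$ to cancel the interior boundary terms against the point-mass row of $\A$, and read off both symmetry and dissipativity from the resulting expression $-k_1\int_{\omega_1}u'\tilde u'-k_2\int_{\omega_2}v'\tilde v'$. The only cosmetic difference is that the paper integrates by parts a second time to display $\langle y,\A\tilde y\rangle_\hi$ explicitly, whereas you stop after one integration and invoke the manifest symmetry of the bilinear form; your version is slightly more economical but not substantively different.
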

\begin{proof}
Consider $\varphi=(\mu,\nu,\zeta)\in D(\A)$. Then
\eq{
  \<\A y,\varphi>_\hi
  &=k_1u'\mu|^0_{-L_1}-k_1\< u',\mu'>_{\omega_1} 
    +k_2v'\nu|^{L_2}_{0}- k_2\< v',\nu'>_{\omega_2} 
    +(k_2v'(0)-k_1u'(0))\zeta\\
   &=-k_1u\mu'|^0_{-L_1}+k_1\< u,\mu''>_{\omega_1} 
    -k_2v\nu'|^{L_2}_{0}+ k_2\< v,\nu''>_{\omega_2} \\
   &=c_1\rho_1\<u,\alpha_1^2\mu''>_{\omega_1} 
    + c_2\rho_2\<v,\alpha_2^2\nu''>_{\omega_2} 
    +c z\left(\frac{k_2}{c}\nu'(0)-\frac{k_2}{c}\mu'(0)\right)\\
  &= \< y,\A\varphi>_\hi
}
for all $y=(u,v,z)\in D(\A)$. 
Hence $D(\A)\subset D(\A^*)$ and so $\A$ is a symmetric operator. 
In particular, when we choose $y=\varphi$ we see from the above computation that 
\eq{
	\<\A y,y>_\hi
		=-k_1\|u'\|^2_{\omega_1} -k_2\|v'\|^2_{\omega_2} 
		=-\|y\|^2_{\W}.
}
Thus we have that $\<\A y,y>_\hi\leq 0$ for any $y\in D(\A)$ as needed to show $\A$ is dissipative.
\end{proof}
\begin{lemma}\label{lem: A is self adjoint}
The operator $\A$ is closed, self-adjoint and its inverse is a compact operator in $\hi$.
\end{lemma}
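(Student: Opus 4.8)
The plan is to read off all three assertions from Lemmas~\ref{lem: limit A is a bijection} and~\ref{lem: limit A symmetric and dissipative}, together with a Rellich-type compactness statement. I would begin with self-adjointness, since closedness then comes for free (adjoints of densely defined operators are always closed). Because $\A$ is densely defined, $\A^*$ exists, and Lemma~\ref{lem: limit A symmetric and dissipative} already gives $\A\subset\A^*$, so only the reverse inclusion is needed. For that I would take $y\in D(\A^*)$ and use the surjectivity from Lemma~\ref{lem: limit A is a bijection} to pick $w\in D(\A)$ with $\A w=\A^* y$; symmetry gives $\A w=\A^* w$, hence $\A^*(y-w)=0$, i.e. $y-w\in\ker\A^*=(\operatorname{ran}(\A))^\perp=\hi^\perp=\{0\}$, so $y=w\in D(\A)$ and $\A=\A^*$. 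Thus $\A$ is self-adjoint, and in particular closed.

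Next I would bound $\A^{-1}$ from $\hi$ into $\W$. By Lemma~\ref{lem: limit A is a bijection} the inverse is defined on all of $\hi$, so fix $\vec f\in\hi$ and set $y=\A^{-1}\vec f\in D(\A)\subset\W$. The energy identity established in the proof of Lemma~\ref{lem: limit A symmetric and dissipative}, namely $\|y\|_\W^2=-\<\A y,y>_\hi$, gives $\|y\|_\W^2=-\<\vec f,y>_\hi\le\|\vec f\|_\hi\|y\|_\hi\le C_0\|\vec f\|_\hi\|y\|_\W$, where $C_0$ is the constant of the continuous embedding $\W\hookrightarrow\hi$. Hence $\|\A^{-1}\vec f\|_\W\le C_0\|\vec f\|_\hi$, so $\A^{-1}\colon\hi\to\W$ is bounded.

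Finally, compactness of $\A^{-1}$ on $\hi$ follows once the embedding $\iota\colon\W\hookrightarrow\hi$ is shown to be compact, since $\A^{-1}$ factors as $\iota$ composed with the bounded map $\A^{-1}\colon\hi\to\W$. A $\W$-bounded sequence is bounded in $H^1(\omega_1)\times H^1(\omega_2)$ (the Dirichlet conditions at $x=-L_1,L_2$ make the $\vartheta$-norm equivalent to the full $H^1$ norm via Poincar\'e), so Rellich--Kondrachov on the bounded intervals $\omega_1,\omega_2$ yields a subsequence converging in $L^2(\omega_1)\times L^2(\omega_2)$; and because $H^1$ of a bounded interval embeds compactly into $C$ of its closure, the traces at $x=0$ — that is, the third components $z_n=u_n(0)=v_n(0)$ — converge in $\R$ along a further subsequence, giving convergence in $\hi$. (Alternatively one could quote the stated topological equivalence $\W\cong H^1_0(\Omega)$ and the classical compactness of $H^1_0(\Omega)\hookrightarrow L^2(\Omega)$.) I expect the only genuine — and still mild — obstacle to be exactly this compactness of $\W\hookrightarrow\hi$: the point-mass coordinate is not controlled by the $L^2$ estimates alone, which is why one must invoke the compact embedding of $H^1$ of an interval into continuous functions to handle the traces.
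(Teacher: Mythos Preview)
Your proposal is correct. The self-adjointness and closedness part is essentially the paper's argument spelled out: the paper cites Rudin's Theorem 13.11 (symmetric $+$ surjective $\Rightarrow$ self-adjoint with bounded inverse) and Theorem 13.9 for closedness, whereas you reproduce the standard proof of 13.11 by hand and note that adjoints are closed. No substantive difference there.

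For compactness of $\A^{-1}$, however, you take a genuinely different route. The paper works with the explicit solution formula \eqref{sol to Ay is f}--\eqref{coeffs in sol to Ay=f} from Lemma~\ref{lem: limit A is a bijection} and splits $\A^{-1}=K_1+K_2$, where $K_1$ is built from Volterra-type integral operators (hence compact) and $K_2$ has finite rank (hence compact once bounded). Your argument is more abstract: you use the dissipativity identity $\|y\|_\W^2=-\<\A y,y>_\hi$ to bound $\A^{-1}:\hi\to\W$, and then factor through the compact embedding $\W\hookrightarrow\hi$, which you justify via Rellich--Kondrachov on $\omega_1,\omega_2$ together with the compact embedding $H^1\hookrightarrow C$ to control the point-mass coordinate. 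Your approach has the advantage of not needing the explicit inverse and of generalizing readily to settings where no closed-form solution is available; the paper's approach is more concrete and makes direct use of the solution formula already in hand. Both are clean and short.
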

\begin{proof}
As mentioned before, $\A$ is densely defined in $\hi$ and from Lemmas \ref{lem: limit A symmetric and dissipative} and \ref{lem: limit A is a bijection} we have that it is symmetric and $R(\A)=\hi$. 
It follows from Theorem 13.11 in \cite{Rudin}, that $\A$ is self-adjoint and its inverse $\A^{-1}$ is bounded in $\hi$. 
Furthermore, since the inverse is bounded, we have $0\in\rho(\A)$ and Theorem 13.9 in \cite{Rudin} implies that $\A$ is closed.

Next we claim that $K:=\A^{-1}$ is compact. 
From formulas \eqref{sol to Ay is f}-\eqref{coeffs in sol to Ay=f} we  can decompose $K=K_1+K_2$ where
\eq{
	K_1\vec{f}= 
	\begin{pmatrix}
		- F(x)\\
		- G(x)\\
		0
	\end{pmatrix}
	,\quad
	K_2\vec{f}= 
	\begin{pmatrix}
		C_u(x+L_1)\\
		C_v(x-L_2)\\
		C_z
	\end{pmatrix}.
}
Since the mappings $f\mapsto F(x)$ and $g\mapsto G(x)$ are Volterra-type operators, $K_1$ is compact. 
Since $0\in\rho(\A)$ and $K_1$ is compact, $K_2$ must be bounded. 
Since also $K_2$ has finite rank, it follows that $K_2$ is compact, and hence also $K$ is compact.
\end{proof}
\begin{prop}\label{prop: limit A c0 analytic compact}
The operator $\A$ is the infinitesimal generator of a strongly continuous $C_0$-semigroup of contractions which extends for $Re(t)>0$ to an analytic semigroup.
\end{prop}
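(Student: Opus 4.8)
The plan is to obtain the $C_0$-contraction-semigroup assertion from the Lumer--Phillips theorem and then to upgrade it to analyticity on $\{\Real(t)>0\}$ by exploiting the self-adjointness established in Lemma \ref{lem: A is self adjoint}. For the first step, recall that $\A$ is densely defined (noted after \eqref{limit cauchy problem}), dissipative by Lemma \ref{lem: limit A symmetric and dissipative}, and closed with $0\in\rho(\A)$ by Lemma \ref{lem: A is self adjoint}. Since $\A=\A^*$ and $\<\A y,y>_\hi\le0$, the spectrum $\sigma(\A)$ is real and contained in $(-\infty,0]$; in particular $R(\lambda I-\A)=\hi$ for every $\lambda>0$. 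The Lumer--Phillips theorem then shows $\A$ generates a $C_0$-semigroup $\{e^{t\A}\}_{t\ge0}$ of contractions on $\hi$.

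For analyticity I would verify the standard sectorial resolvent estimate. Fix $\theta\in(0,\pi/2)$ and set $\Sigma_\theta=\{\mu\in\C\setminus\{0\}:|\arg\mu|<\tfrac{\pi}{2}+\theta\}$; since $\sigma(\A)\subset(-\infty,0]$ we have $\Sigma_\theta\subset\rho(\A)$. Because $\A$ is self-adjoint, the spectral theorem gives $\|(\mu I-\A)^{-1}\|_\hi=\operatorname{dist}(\mu,\sigma(\A))^{-1}\le\operatorname{dist}(\mu,(-\infty,0])^{-1}$. A short geometric computation shows $\operatorname{dist}(\mu,(-\infty,0])=|\mu|$ when $\Real\mu\ge0$, and $\operatorname{dist}(\mu,(-\infty,0])=|\Im\mu|\ge|\mu|\cos\theta$ when $\Real\mu<0$ and $\mu\in\Sigma_\theta$ (using that $\sin$ is decreasing on $(\tfrac{\pi}{2},\pi)$). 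Hence $\|(\mu I-\A)^{-1}\|_\hi\le(\cos\theta)^{-1}|\mu|^{-1}$ uniformly on $\Sigma_\theta$. As this holds for every $\theta<\pi/2$, the classical characterization of generators of bounded analytic semigroups yields that $\A$ generates an analytic semigroup of angle $\pi/2$, i.e. $e^{t\A}$ extends analytically to the right half-plane $\{\Real(t)>0\}$.

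I do not anticipate a genuine obstacle here: the substantive work — density, dissipativity, and above all self-adjointness — has already been carried out in Lemmas \ref{lem: limit A is a bijection}--\ref{lem: A is self adjoint}, so this proposition is essentially an application of Lumer--Phillips plus the spectral theorem. The only point that requires a little care is the resolvent bound on the portion of $\Sigma_\theta$ lying in the left half-plane, which is exactly where self-adjointness (rather than mere dissipativity) is used. As a side remark worth recording for later, since $\A^{-1}$ is compact by Lemma \ref{lem: A is self adjoint} and the semigroup is analytic, $e^{t\A}$ is in fact a compact semigroup for $t>0$.
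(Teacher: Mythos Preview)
Your proposal is correct and follows essentially the same route as the paper: Lumer--Phillips for the contraction semigroup (the paper invokes the ``$\A$ and $\A^*$ both dissipative'' form rather than your range condition, but these are equivalent here via self-adjointness), followed by a sectorial resolvent estimate deduced from $\sigma(\A)\subset(-\infty,0]$ and self-adjointness. The only cosmetic difference is that the paper cites \cite[p.~55]{BePraDel} for the bound $\|R(z,\A)\|\le \sec(\theta/2)/|z|$ whereas you derive the equivalent estimate directly from the spectral theorem and an explicit distance computation.
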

\begin{proof}
By Lemma \ref{lem: A is self adjoint}, we have $\A$ is a closed, densely defined and self-adjoint operator and by Lemma \ref{lem: limit A symmetric and dissipative} we see that both $\A$ and $\A^*$ are dissipative. Therefore, by the  L\"{u}mer-Phillips theorem (see Luo et al \cite{LuGuMo}), we have that $\A$ generates a $C_0$-semigroup of contractions.

Furthermore, the spectrum $\sigma(\A)$ of $\A$ is contained in $(-\infty,0)$ and by the computations shown in \cite[page 55]{BePraDel}, we obtain $\|R(z,\A)\|\leq \sec(\theta/2)/|z|$
for all $z=\rho e^{i\theta}\in\C\setminus(-\infty,0]$ and $\theta\in(\pi/2,\pi)$. 
Rewriting the angle $\theta$ as $\pi/2+\delta$ where $0<\delta<\pi/2$ and letting $M=\sec(\theta/2)$ we see that 
\eq{
	\|R(z,\A)\|\leq \frac{M}{|z|} \text{ for all } z\in\sector_\theta
}
where 
\eq{
	\sector_{\theta}=\left\{z\in\C~:~ |\arg z|<\theta\right\}.
}	
Note that $\sector_{\theta}\cup\{0\}$ is contained in the resolvent set $\rho(\A)$ of $\A$. 
By Theorem 5.2. in \cite{Pazy} we have that $T$ can be extended to an analytic semigroup in a sector $\sector_{\delta}$. 
If $\text{Re}(z)>0$ then for $\delta<\pi/2$ large enough, that $z\in\sector_\delta$.
Hence $A$ generates an analytic semigroup in the right half plane $\text{Re}(z)>0$.
\end{proof}
As a consequence of Proposition \ref{prop: limit A c0 analytic compact}, given initial data $y^0\in\hi$, there exists a unique solution
\eql{limit solution space}{
	y\in C([0,\infty);\hi)
}
to the Cauchy problem \eqref{limit cauchy problem}. 
If in addition, $y^0\in D(\A)$, then $y\in C([0,\infty);D(\A))$.


\subsection{The approximate problem}
\label{subsec: approx problem}
Now consider functions $\e{u}$, $\e{v}$ and $\e{z}$ defined on $\oep{1}$, $\oep{2}$ and $\oepm$ respectively and define $\ye=(\ue,\ve,\ze)^t$.
Let 
\eq{
	\e{\hi} = L^2(\oep{1})\times L^2(\oep{2})\times L^2(\oepm).
}
equipped with the norm 
\eq{
	\|\ye\|^2_{\hie}=c_1\rho_1\|\ue\|^2_{\omega_{\ep,1}}+c_2\rho_2\|\ve\|^2_{\omega_{\ep,2}}
		+\frac{c}{2\ep}\|\ze\|^2_{\omega_\ep}
}
where $\|\cdot\|_{\omega_{\ep,i}}$ is the usual norm in $L^2(\oep{i})$ for $i=1,2$ and $\|\cdot\|_{\omega_\ep}$ is the usual norm in $L^2(\oepm)$.
Define
\eq{
	&\vartheta_{\omega_{\ep,1}}=\{\ue\in H^1(\oep{1})~|~\ue(-L_1)=0\}\\
	&\vartheta_{\omega_{\ep,2}}=\{\ve\in H^1(\oep{2})~|~\ve(L_2)=0\}
}
equipped with the norms
\eq{
	&\|\ue\|^2_{\vartheta_{\omega_{\ep,i}}} = k_i \|\ue'\|^2_{L^2(\oep{i})},
}
for $i=1,2$. 
Next, consider the following subspace of $\vartheta_{\omega_{\ep,1}}\times\vartheta_{\omega_{\ep,2}}\times H^1(\oepm)$:
\eq{
	\e\W = \{\ye\in \vartheta_{\omega_{\ep,1}}\times\vartheta_{\omega_{\ep,2}}\times H^1(\oepm)~|~\ue(-\ep)=\ze(-\ep),~\ze(\ep)=\ve(\ep)\}
}
with the norm
\eq{
	\|\ye\|^2_{\e\W}
		=k_1 \|\ue'\|^2_{\oep{1}}+k_2 \|\ve'\|^2_{\oep{2}}+k\|\ze'\|^2_{\oepm}.
}
\begin{rmk}\label{rmk: equivalence of We}
It is easy to show that the spaces $\e\W$ are uniformly equivalent to $H^1_0(\Omega)$ in the sense that there exists some constant $C>0$ such that
\eq{
	\frac{1}{C} \|\varphi\|_{\e\W} \leq \|\varphi\|_{H^1_0(\Omega)} \leq C \|\varphi\|_{\e\W}
}
where $C$ is independent of $\ep$ for all $0<\ep<\ep_0$ with finite $\ep_0$.
Furthermore, it is easy to see that the space $\e{W}$ is densely and continuously embedded in the space $\hie$.
\end{rmk}
We will also make use of the space $\hie^2=H^2(\oep{1})\times H^2(\oep{2})\times H^2(\oepm)$.
Define the unbounded operators $\Ae:D(\Ae)\subset\hie\rightarrow\hie$ by 
\eql{approx generator}{
\Ae=
\begin{pmatrix} 
	\alpha_1^2d^2&0&0\\
	0& \alpha_2^2d^2&0\\
	0 & 0 & \frac{2\ep k}{c}d^2\\
\end{pmatrix},
}
with domain $D(\Ae)$ given by 
\eq{
	D(\Ae)=\left\{\ye\in \e{\W}~:~ \ye \in \hie^2 , ~k_1\ue'(-\ep)=k\ze'(-\ep),~k\ze'(\ep)=k_2\ve'(\ep)\right\}.
}
When $D(\Ae)$ is equipped with the graph-norm topology 
\eq{
	\|\ye\|^2_{D(\Ae)} = \|\ye\|^2_{\hie} + \|\Ae\ye\|^2_{\hie},
}
it becomes a Hilbert space with continuous embedding in $\hie$. We can now rewrite system \eqref{approx sys} as a Cauchy problem: 
\eql{approx cauchy problem}{
	\dot{\ye}(t) = \Ae \ye(t), \quad \ye(0) = \yeo\in \hie, \quad t>0.
}
It is easy to see that $\Ae$ is densely defined on $\hie$.
As in Section \ref{subsec: limit problem} we have the following results.
\begin{lemma}\label{lem: approx A properties}
The operator $\Ae:D(\Ae)\rightarrow\hie$ is a bijective, dissipative, closed, self-adjoint operator with a compact inverse in $\hie$.
\end{lemma}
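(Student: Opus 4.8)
The plan is to repeat, essentially verbatim, the arguments of Lemmas \ref{lem: limit A is a bijection}, \ref{lem: limit A symmetric and dissipative} and \ref{lem: A is self adjoint}, with the two interface conditions at $x=\pm\ep$ playing the role that the single interface condition at $x=0$ played for $\A$. For \emph{bijectivity}, given $\vec f=(f,g,h)\in\hie$ I would solve $\Ae\ye=\vec f$ by integrating the three decoupled ODEs $\ue''=\alpha_1^{-2}f$ on $\oep{1}$, $\ve''=\alpha_2^{-2}g$ on $\oep{2}$, and $\ze''=\tfrac{c}{2\ep k}h$ on $\oepm$. Each integration produces a particular solution (a double integral of the data, as in \eqref{coeffs in sol to Ay=f}) plus an affine term, so there are six free constants. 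The six scalar conditions defining $D(\Ae)$ — the Dirichlet conditions $\ue(-L_1)=0$, $\ve(L_2)=0$, the continuity conditions $\ue(-\ep)=\ze(-\ep)$, $\ze(\ep)=\ve(\ep)$, and the flux conditions $k_1\ue'(-\ep)=k\ze'(-\ep)$, $k\ze'(\ep)=k_2\ve'(\ep)$ — then determine a $6\times6$ linear system for these constants. I would check that this system is uniquely solvable; since $(\ue'',\ve'',\ze'')\in L^2$ on each subinterval, the resulting $\ye$ lies in $\hie^2$, and by construction it satisfies every constraint, so $\ye\in D(\Ae)$ and $\Ae$ is onto. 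Injectivity is immediate: if $\Ae\ye=0$ then $\ue,\ve,\ze$ are affine, the homogeneous $6\times6$ system has only the trivial solution, and $\ye=0$.

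For \emph{symmetry and dissipativity}, for $\ye=(\ue,\ve,\ze)$, $\varphi=(\mu,\nu,\zeta)\in D(\Ae)$ I would integrate by parts twice in $\<\Ae\ye,\varphi>_{\hie}$, exactly as in the proof of Lemma \ref{lem: limit A symmetric and dissipative}. The weights $c_1\rho_1$, $c_2\rho_2$, $\tfrac{c}{2\ep}$ cancel the coefficients $\alpha_1^2$, $\alpha_2^2$, $\tfrac{2\ep k}{c}$, leaving $k_1\<\ue'',\mu>_{\oep1}+k_2\<\ve'',\nu>_{\oep2}+k\<\ze'',\zeta>_{\oepm}$ together with boundary terms. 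Those at $x=-L_1,L_2$ vanish by the Dirichlet conditions, and those at $x=\pm\ep$ cancel pairwise: the continuity conditions pair the ``$\ue'\mu$''-type terms across the wall while the flux conditions annihilate the ``$\ue\mu'$''-type terms. Hence $\<\Ae\ye,\varphi>_{\hie}=\<\ye,\Ae\varphi>_{\hie}$, so $D(\Ae)\subset D(\Ae^*)$ and $\Ae$ is symmetric. Setting $\varphi=\ye$ gives
\eq{
	\<\Ae\ye,\ye>_{\hie}
	=-\bigl(k_1\|\ue'\|^2_{\oep1}+k_2\|\ve'\|^2_{\oep2}+k\|\ze'\|^2_{\oepm}\bigr)
	=-\|\ye\|^2_{\e\W}\le 0,
}
so $\Ae$ is dissipative.

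For \emph{closedness, self-adjointness and compactness of the inverse}: since $\Ae$ is densely defined, symmetric and $R(\Ae)=\hie$, Theorem 13.11 of \cite{Rudin} gives that $\Ae$ is self-adjoint with bounded inverse, whence $0\in\rho(\Ae)$ and Theorem 13.9 of \cite{Rudin} shows $\Ae$ is closed. For compactness of $K:=\Ae^{-1}$ I would split $K=K_1+K_2$ exactly as for $\A$: $K_1\vec f$ collects the double-integral parts on the three subintervals and $K_2\vec f$ collects the affine parts carrying the six constants. The former is a Volterra-type map, hence compact; the latter has finite rank (affine functions on three intervals) and is bounded because $K_2=K-K_1$ with $K$ bounded, so $K_2$ is compact, and therefore so is $K$.

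The main obstacle is the honest verification that the $6\times6$ system for the integration constants is invertible: for $\A$ this collapsed to the single denominator $k_2L_1+k_1L_2$, whereas here the wall adds two unknowns and two flux equations mixing $k,k_1,k_2,\ep,L_1,L_2$. A clean way to avoid expanding the determinant is to note that a square linear system is invertible precisely when its kernel is trivial, so it suffices to show the \emph{homogeneous} system has only the zero solution — and that follows at once from the dissipativity identity above applied to an element of $\ker\Ae$, together with the Dirichlet and continuity conditions. Everything else is a routine transcription of the three limit-problem lemmas.
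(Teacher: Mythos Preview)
Your proposal is correct and is precisely what the paper intends: the paper gives no separate proof of this lemma, only the remark ``As in Section \ref{subsec: limit problem} we have the following results,'' so the intended argument is exactly the transcription of Lemmas \ref{lem: limit A is a bijection}--\ref{lem: A is self adjoint} that you outline. Your use of the dissipativity identity to show the homogeneous $6\times6$ system has only the trivial solution is a clean way to handle the one new wrinkle (the extra pair of interface constants), and everything else matches.
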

\begin{prop}\label{prop: approx A c0 analytic compact}
The operator $\Ae$ is the infinitesimal generator of a strongly continuous $C_0$-semigroup of contractions which extends for $Re(t)>0$ to an analytic semigroup.
\end{prop}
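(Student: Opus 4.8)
The proof runs in parallel with that of Proposition \ref{prop: limit A c0 analytic compact}, using Lemma \ref{lem: approx A properties} in place of Lemmas \ref{lem: limit A is a bijection}--\ref{lem: A is self adjoint}. First I would invoke Lemma \ref{lem: approx A properties}: $\Ae$ is densely defined, closed, self-adjoint and dissipative on $\hie$, and since $\Ae = \Ae^*$, its adjoint is dissipative as well. The L\"{u}mer--Phillips theorem (see \cite{LuGuMo}) then yields that $\Ae$ generates a $C_0$-semigroup of contractions on $\hie$.

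To upgrade this to analyticity I would use self-adjointness and dissipativity to locate the spectrum and estimate the resolvent. Since $\Ae$ is self-adjoint and dissipative, $\sigma(\Ae)\subset(-\infty,0]$; because Lemma \ref{lem: approx A properties} gives that $\Ae$ is bijective, $0\in\rho(\Ae)$ and hence $\sigma(\Ae)\subset(-\infty,0)$. For a self-adjoint dissipative operator the computation in \cite[page 55]{BePraDel} applies verbatim and gives
\eq{
	\|R(z,\Ae)\|\leq \frac{\sec(\theta/2)}{|z|}\quad\text{for all } z=\rho e^{i\theta},\ \theta\in(\pi/2,\pi).
}
Writing $\theta=\pi/2+\delta$ with $0<\delta<\pi/2$ and $M_\ep=\sec(\theta/2)$, this says $\|R(z,\Ae)\|\leq M_\ep/|z|$ on the sector $\sector_\theta=\{z\in\C:|\arg z|<\theta\}$, and $\sector_\theta\cup\{0\}\subset\rho(\Ae)$. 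Applying Theorem 5.2 in \cite{Pazy}, the semigroup generated by $\Ae$ extends to an analytic semigroup in the sector $\sector_\delta$; taking $\delta<\pi/2$ close enough to $\pi/2$, every $z$ with $\Real(z)>0$ lies in $\sector_\delta$, so $\Ae$ generates an analytic semigroup on the right half-plane $\Real(t)>0$.

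The only point requiring care -- and the step I expect to be the main obstacle -- is Lemma \ref{lem: approx A properties} itself: that the $\ep$-dependent weight $c/2\ep$ in the third slot of $\|\cdot\|_{\hie}$, together with the Kirchhoff-type flux conditions $k_1\ue'(-\ep)=k\ze'(-\ep)$, $k\ze'(\ep)=k_2\ve'(\ep)$, still makes $\Ae$ symmetric and bijective. This is checked exactly as in Lemmas \ref{lem: limit A is a bijection}--\ref{lem: A is self adjoint}: integrating by parts twice on each of the three subintervals, all interface boundary terms cancel in conjugate pairs thanks to the continuity conditions in $\e\W$ and the flux-matching conditions in $D(\Ae)$, which yields symmetry and dissipativity; solvability of $\Ae\ye=\vec f$ is obtained by the same explicit quadrature as in \eqref{sol to Ay is f}--\eqref{coeffs in sol to Ay=f}, producing $\ye\in\hie^2\cap\e\W$ satisfying the flux conditions, and triviality of the kernel shows injectivity. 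Theorem 13.11 and Theorem 13.9 in \cite{Rudin} then give self-adjointness, boundedness of $\Ae^{-1}$, and closedness, and compactness of $\Ae^{-1}$ follows from a Volterra-plus-finite-rank decomposition exactly as in Lemma \ref{lem: A is self adjoint}.

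Finally, I would remark that for the present statement $\ep>0$ is fixed, so the constant $M_\ep$ (equivalently the opening angle of analyticity) is allowed to depend on $\ep$; uniformity in $\ep$ is not needed here, and indeed the uniform equivalence of $\e\W$ with $H^1_0(\Omega)$ noted in Remark \ref{rmk: equivalence of We} is reserved for the convergence analysis.
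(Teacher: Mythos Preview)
Your proposal is correct and follows exactly the route the paper intends: the paper gives no separate proof for this proposition, merely indicating (``As in Section \ref{subsec: limit problem} we have the following results'') that the argument of Proposition \ref{prop: limit A c0 analytic compact} carries over verbatim with Lemma \ref{lem: approx A properties} replacing Lemmas \ref{lem: limit A is a bijection}--\ref{lem: A is self adjoint}. Your write-up simply makes this explicit, and your closing remarks on the $\ep$-dependence of constants and on verifying Lemma \ref{lem: approx A properties} via the interface cancellations are accurate but go beyond what the paper records.
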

The fact that $\Ae$ is the infinitesimal generator of an analytic $C_0$-semigroup implies that for all $\yeo\in \hie$ there exists a unique solution 
\eql{approx solution space}{
\ye\in C([0,\infty);\hie)
}
to \eqref{approx cauchy problem}.
Moreover, if $\yeo\in D(\Ae)$, then also $\ye\in C([0,\infty);D(\Ae))$.

\section{WEAK CONVERGENCE}

The energy functional of the hybrid system \eqref{limit sys} is given by $E(t)=\|y\|^2_\hi /2$.
By taking test functions $\varphi\in C^1_0\big([0,\infty)\times\Omega\big)$, a weak form of the hybrid system \eqref{limit sys} is given by
\eql{limit weak form}{
\begin{split}
	\int\limits_{\omega_1}c_1\rho_1u^0\varphi(0,x)~dx
	&+\int\limits_{\omega_2}c_2\rho_2v^0\varphi(0,x)~dx
	+c z^0\varphi(0,0)\\
	&=-\int\limits^\infty_0\bigg\{
		\int\limits_{\omega_1}c_1\rho_1u\dot\varphi~dx
		+\int\limits_{\omega_2}c_2\rho_2v\dot\varphi~dx
		+c z\dot\varphi(t,0)\bigg\}~dt\\
	&+
	\int\limits^\infty_0\bigg\{
		\int\limits_{\omega_1}k_1u'\varphi'~dx
		+\int\limits_{\omega_2}k_2v'\varphi'~dx\bigg\}~dt.
\end{split}
}
On the other hand, the energy functional for the $\ep$-dependent problem \eqref{approx sys} is $\e{E}(t)=\|\ye\|^2_{\hie}/2$
and by taking test functions $\varphi\in C^1_0\big([0,\infty)\times\Omega\big)$, a weak form is
\eql{approx weak form}{
\begin{split}
	&\int\limits_{\oep{1}}c_1\rho_1\ueo\varphi(0,x)~dx
	+\int\limits_{\oep{2}}c_2\rho_2\veo\varphi(0,x)~dx
	+\int\limits_{\oepm}\frac{c}{2\ep}\zeo\varphi(0,x)~dx\\
	&\qquad\qquad=
	-\int\limits^\infty_0\bigg\{
		\int\limits_{\oep{1}}c_1\rho_1\ue\dot\varphi~dx
		+\int\limits_{\oep{2}}c_2\rho_2\ve\dot\varphi~dx
		+\int\limits_{\oepm}\frac{c}{2\ep}\ze\dot\varphi~dx\bigg\}~dt\\
	&\qquad\qquad\quad+
	\int\limits^\infty_0\bigg\{
		\int\limits_{\oep{1}}k_1\ue'\varphi'~dx
		+\int\limits_{\oep{2}}k_2\ve'\varphi'~dx
		+\int\limits_{\oepm}k\ze'\varphi'~dx\bigg\}~dt.
\end{split}
}
We give sufficient conditions such that we may pass to the limit in \eqref{approx weak form} to consequently obtain \eqref{limit weak form}.
Assume that $\yeo\in D(\Ae)$ and furthermore, there exists $M_1>0$ such that 
\eql{cond: initial data unif bound 1}{
	\|\yeo\|_{\hie}\leq M_1,
}
for all $\ep>0$.
Then we obtain the following result.
\begin{lemma}\label{lem: approx energy bounded}
The energy of system \eqref{approx sys} is (uniformly) bounded by the initial energy in the sense that there exists constant $C>0$ such that $\e{E}(t)\leq C$ for all $\ep>0$ whenever the initial data $\yeo$ satisfies \eqref{cond: initial data unif bound 1}.
\end{lemma}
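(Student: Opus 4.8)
The plan is to read the bound off directly from the contraction property of the semigroup generated by $\Ae$. By Proposition \ref{prop: approx A c0 analytic compact} the operator $\Ae$ generates a $C_0$-semigroup of contractions on $\hie$, so the solution of \eqref{approx cauchy problem} satisfies $\|\ye(t)\|_{\hie} = \|e^{t\Ae}\yeo\|_{\hie} \leq \|\yeo\|_{\hie}$ for every $t\geq 0$. Invoking the uniform bound \eqref{cond: initial data unif bound 1} on the data gives $\|\ye(t)\|_{\hie}\leq M_1$ for all $t\geq 0$ and all $\ep>0$, hence $\e{E}(t) = \tfrac12\|\ye(t)\|^2_{\hie} \leq \tfrac12 M_1^2$, and one takes $C := M_1^2/2$, which depends neither on $t$ nor on $\ep$.

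Since the hypothesis of the lemma already assumes $\yeo\in D(\Ae)$, I would also record the equivalent argument through the energy identity, because it makes the dissipation mechanism explicit (and that mechanism will be reused later in the convergence analysis). For $\yeo\in D(\Ae)$, Proposition \ref{prop: approx A c0 analytic compact} gives $\ye\in C([0,\infty);D(\Ae))$, so $\ye\in C^1([0,\infty);\hie)$ and $\e{E}$ is differentiable with $\frac{d}{dt}\e{E}(t) = \Real\<\dot{\ye}(t),\ye(t)>_{\hie} = \Real\<\Ae\ye(t),\ye(t)>_{\hie}$. Repeating verbatim the integration by parts from the proof of Lemma \ref{lem: limit A symmetric and dissipative} — on each of the three subintervals $\oep{1}$, $\oep{2}$, $\oepm$, using the flux transmission conditions $k_1\ue'(-\ep)=k\ze'(-\ep)$ and $k\ze'(\ep)=k_2\ve'(\ep)$ to cancel the interface terms at $x=\pm\ep$, together with the homogeneous Dirichlet conditions at $x=-L_1,L_2$ — yields $\<\Ae\ye,\ye>_{\hie} = -\|\ye\|^2_{\e\W}\leq 0$, i.e.\ the dissipativity already contained in Lemma \ref{lem: approx A properties}. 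Hence $\e{E}(\cdot)$ is nonincreasing and $\e{E}(t)\leq \e{E}(0) = \tfrac12\|\yeo\|^2_{\hie}\leq \tfrac12 M_1^2$, recovering the same constant.

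There is no genuine obstacle here; the statement is essentially a restatement of the contraction property, and the only point deserving a remark is the role of the $\ep$-dependent weight. The factor $c/2\ep$ in the definition of $\|\cdot\|_{\hie}$ is precisely the coefficient multiplying $\dot{\ze}$ in the third equation of \eqref{approx sys}, so the dissipation computation closes with a sign that does not degenerate as $\ep\to 0$; combined with the uniform bound on $\|\yeo\|_{\hie}$, this is exactly what makes the resulting energy bound independent of $\ep$. Had the lemma only assumed $\yeo\in\hie$, one would extend the estimate from $D(\Ae)$ to all of $\hie$ by density of $D(\Ae)$ in $\hie$ and the continuous dependence on the data furnished by the contraction semigroup.
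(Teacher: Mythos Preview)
Your proposal is correct, and your second argument --- differentiating the energy, integrating by parts on each subinterval with the transmission and Dirichlet conditions to obtain $\dot{E}_\ep(t)=-\|\ye\|^2_{\e\W}\le 0$, and concluding with $C=M_1^2/2$ --- is precisely the paper's proof. Your first argument via the contraction property of $e^{t\Ae}$ is a legitimate shortcut the paper does not spell out, though it is of course equivalent since the contraction property itself was obtained from the same dissipativity computation.
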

\begin{proof} Note that if $\yeo\in D(\Ae)$, then $\ye\in C\big([0,\infty);D(\Ae)\big)$ and the energy satisfies
\eql{energy decay computations}{
\begin{split}
	\dot{E}_{\ep}(t) 
	&=k_1\ue'\ue\big|_{-L_1}^{-\ep}+k_2\ve'\ve\big|_{\ep}^{L_2}+k\ze'\ze\big|_{-\ep}^{\ep}\\
	&\qquad
		-\int\limits_{\oep{1}} k_1|\ue'|^2~dx
		-\int\limits_{\oep{2}} k_2|\ve'|^2~dx
		-\int\limits_{\oepm} k|\ze'|^2~dx\\
	&=-\|\ye\|^2_{\e{\W}},
\end{split}
}
which implies $\dot{E}_{\ep}(t)\leq0$, and thus $E_\ep(t)\leq E_\ep(0)$.
Hence by density, there exists $C=M_1^2/2$ for which $E_\ep(t)\leq C$ for all $t>0$ and initial data satisfying \eqref{cond: initial data unif bound 1}.
Consequently, we find that $\ye\in L^\infty\big([0,\infty);\hie\big)$ for all $\ep>0$.
\end{proof}
Now assume there exists $M_2>0$ such that 
\eql{cond: initial data unif bound 2}{
	\|\yeo\|_{\e\W}\leq M_2,
}
for all $\ep>0$. 
Then we obtain the following result.
\begin{lemma}\label{lem: bndd space}
Assuming condition \eqref{cond: initial data unif bound 2} holds, the sequence solutions $\{\ye\}_{\ep>0}$ to problem \eqref{approx sys} is uniformly bounded in $L^\infty\big([0,\infty);H^1_0(\Omega)\big)$.
\end{lemma}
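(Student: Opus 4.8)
\emph{Proof proposal.} The plan is to reduce the claim to a uniform-in-$\ep$ bound on the stronger norm $\|\ye(t)\|_{\e\W}$, and then to obtain that bound from the fact that $\|\cdot\|^2_{\e\W}$ is exactly the quadratic form of the nonnegative self-adjoint operator $-\Ae$. First, by Remark \ref{rmk: equivalence of We} there is a constant $C$, independent of $\ep$, with $\|\ye(t)\|_{H^1_0(\Omega)}\le C\,\|\ye(t)\|_{\e\W}$ for all $0<\ep<\ep_0$, so it suffices to prove $\sup_{t\ge0}\|\ye(t)\|_{\e\W}\le M_2$ for each fixed $\ep$. Next, the boundary terms in the energy computation \eqref{energy decay computations} (equivalently, the symmetry identity behind Lemma \ref{lem: approx A properties}) cancel thanks to the interface conditions defining $D(\Ae)$ and $\e\W$, giving
\eq{
	\<-\Ae\ye,\ye>_{\hie}=\|\ye\|^2_{\e\W}\qquad\text{for all }\ye\in D(\Ae),
}
so that $\|\cdot\|_{\e\W}$ coincides with the form-domain norm of $-\Ae$.

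Then I would carry out the standard ``energy decay in the stronger norm'' argument. Since $\yeo\in D(\Ae)$ and $\Ae$ generates an analytic semigroup (Proposition \ref{prop: approx A c0 analytic compact}), the solution satisfies $\ye\in C([0,\infty);D(\Ae))$ and moreover $\ye(t)\in\bigcap_n D(\Ae^n)$ with $t\mapsto\ye(t)$ smooth into $\hie$ for $t>0$ — ample regularity to differentiate $t\mapsto\|\ye(t)\|^2_{\e\W}=\<-\Ae\ye(t),\ye(t)>_{\hie}$ on $(0,\infty)$. Using $\dot\ye=\Ae\ye$ and the symmetry of $\Ae$,
\eq{
	\frac{d}{dt}\|\ye(t)\|^2_{\e\W}
	= 2\,\Real\<-\Ae\ye(t),\dot\ye(t)>_{\hie}
	= -2\,\|\Ae\ye(t)\|^2_{\hie}\ \le\ 0 .
}
Integrating over $[\tau,t]$ and letting $\tau\to0^+$ — continuity of $t\mapsto\|\ye(t)\|_{\e\W}$ at $t=0$ holding because $\ye\in C([0,\infty);D(\Ae))$ and, for fixed $\ep$, $\|\varphi\|^2_{\e\W}=\<-\Ae\varphi,\varphi>_{\hie}\le\tfrac12\|\varphi\|^2_{D(\Ae)}$ — yields $\|\ye(t)\|_{\e\W}\le\|\yeo\|_{\e\W}\le M_2$ for all $t\ge0$.

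Combining this with Remark \ref{rmk: equivalence of We} gives $\|\ye(t)\|_{H^1_0(\Omega)}\le CM_2$ for all $t\ge0$ and all $0<\ep<\ep_0$, which is precisely the asserted uniform bound in $L^\infty([0,\infty);H^1_0(\Omega))$. An essentially equivalent, slicker route is $\|\ye(t)\|_{\e\W}=\|(-\Ae)^{1/2}e^{t\Ae}\yeo\|_{\hie}=\|e^{t\Ae}(-\Ae)^{1/2}\yeo\|_{\hie}\le\|(-\Ae)^{1/2}\yeo\|_{\hie}=\|\yeo\|_{\e\W}$, using that $(-\Ae)^{1/2}$ commutes with the contraction semigroup, but this still relies on identifying the form domain of $-\Ae$ with $\e\W$, and on the spectral representation for the fractional power. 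In either version the only genuine subtlety — and the point worth emphasizing — is that the decay estimate is performed separately for each $\ep$, so its assembly into a bound \emph{uniform} in $\ep$ rests entirely on the $\ep$-independence of the constant in Remark \ref{rmk: equivalence of We}; the regularity bookkeeping at $t=0$ is the only other place where a little care is needed.
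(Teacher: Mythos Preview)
Your proposal is correct and follows essentially the same approach as the paper: both differentiate $t\mapsto\|\ye(t)\|^2_{\e\W}$, show it is nonincreasing, conclude $\|\ye(t)\|_{\e\W}\le\|\yeo\|_{\e\W}\le M_2$, and then invoke Remark~\ref{rmk: equivalence of We} for the $\ep$-uniform passage to $H^1_0(\Omega)$. The only difference is cosmetic: the paper carries out the integration by parts explicitly, whereas you package it as the identity $\<-\Ae\ye,\ye>_{\hie}=\|\ye\|^2_{\e\W}$ and differentiate abstractly (obtaining $-2\|\Ae\ye\|^2_{\hie}$, which is indeed what the paper's computation yields).
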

\begin{proof}

Since $\yeo\in D(\Ae)$ we have that $\ye\in C\big([0,\infty);D(\Ae)\big)$ and 
\eq{
	\frac{d}{dt}\frac{1}{2}\|\ye\|^2_{\e\W}
	&= k_1\dot\ue\ue'\big|^{-\ep}_{-L_1}
		+k_2\dot\ve\ve'\big|_{\ep}^{L_2}
		+k\dot\ze\ze'\big|_{-\ep}^{\ep}\\
	&\qquad-k_1\<\dot\ue,\ue''>_{\ep,1}~dx-k_2\<\dot\ve\ve''>_{\ep,2}-k\<\dot\ze\ze''>_{\ep}\\
	&=-\|\ye'\|^2_{\e\W}.
}
This shows that the sequence $\{\|\ye(t)\|^2_{\e\W}\}_{t>0}$ is monotone decreasing in $t$ and thus by density, $\|\ye(t)\|_{\e\W}\leq \|\yeo\|_{\e\W}\leq M_2$ for all $t>0$ as needed to show $\ye\in L^\infty(0,\infty;\e{\W})$. 
From Remark \ref{rmk: equivalence of We} we see that the spaces $\e\W$ and $H^1_0(\Omega)$ are equivalent and thus there exists $K>0$ independent of $\ep$ such that 
\eq{
	\|\ye\|_{L^\infty([0,\infty);H^1_0(\Omega))}\leq K
}
for all $\ep>0$, as needed to show solutions to the $\ep$-dependent problem are uniformly bounded in $\mathcal{Y}$.
\end{proof}
Next, it is natural to assume the initial data convergences in a weak sense in $\hie$. 
It is easy to see that $\ueo$ being a sequence in $L^2(\oep{1})$ implies that $\chi_{\oep{1}}\ueo$ is a sequence in $L^2(\omega_1)$. 
Likewise, $\chi_{\oep{2}}\veo\in L^2(\omega_1)$.
Then we will assume that 
\eql{weak convergence of initial data}{
\begin{cases}
	&\chi_{\oep{1}}\ueo \rightharpoonup u_0 \text{ weakly in } L^2(\omega_1) \text{ as } \ep \rightarrow 0\\
	&\chi_{\oep{2}}\veo \rightharpoonup v_0 \text{ weakly in } L^2(\omega_2) \text{ as } \ep \rightarrow 0\\
	&\frac{1}{2\ep}\int_{\oepm} \zeo\dx \rightarrow z_0 \text{ in } \R \text{ as } \ep \rightarrow 0.
\end{cases}
}
The following theorem is our main result.
\begin{thm}\label{thm: weak conv}
Let $\{\ye\}_{\ep>0}$ be the sequence of solutions to the $\ep-$dependent problem \eqref{approx sys} with initial data $\yeo$.
Assuming \eqref{cond: initial data unif bound 1}, \eqref{cond: initial data unif bound 2} and \eqref{weak convergence of initial data}, the family $\{\ye\}_{\ep>0}$ of solutions to \eqref{approx sys} problem satisfies
\eq{
	\ye \weakstar y \text{ in } L^\infty\big([0,\infty;H^1_0(\Omega)\big)
}
as $\ep\rightarrow0$ where $y$ is the weak solution to the limit problem \eqref{limit sys} with initial $y^0\in\hi$.
\end{thm}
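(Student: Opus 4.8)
The plan is to pass to the limit in the weak formulation \eqref{approx weak form} using the uniform bounds already in hand, to identify the limit through the weak form \eqref{limit weak form}, and to finish by the uniqueness statement for the limit problem. First I would extract a limit. By Lemma \ref{lem: bndd space} the family $\{\ye\}$, regarded as a family in $H^1_0(\Omega)$ (each $\ye(t)$ is continuous across $x=\pm\ep$ since it lies in $\e\W$), is bounded in $L^\infty([0,\infty);H^1_0(\Omega))$, so Banach--Alaoglu yields a subsequence, not relabeled, with $\ye\weakstar y$ in $L^\infty([0,\infty);H^1_0(\Omega))$ and hence $\ye'\weakstar y'$ in $L^\infty([0,\infty);L^2(\Omega))$. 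Differentiating $\tfrac12\|\ye\|^2_{\e\W}$ along solutions and using the interface conditions of $\e\W$ gives the identity $\tfrac{d}{dt}\tfrac12\|\ye\|^2_{\e\W}=-\|\dot\ye\|^2_{\hie}$, which after integration and \eqref{cond: initial data unif bound 2} produces $\int_0^\infty\|\dot\ye\|^2_{\hie}\,dt\le\tfrac12 M_2^2$; since $\ep\le\ep_0$ this bounds $\{\dot\ye\}$ uniformly in $L^2([0,\infty);L^2(\Omega))$, so by the Aubin--Lions--Simon lemma (using $H^1_0(\Omega)\hookrightarrow\hookrightarrow L^2(\Omega)$) a further subsequence converges strongly, $\ye\to y$ in $C([0,T];L^2(\Omega))$ for each $T>0$, and interpolation with the uniform $H^1_0$ bound together with the one-dimensional Sobolev embedding upgrades this to $\ye\to y$ in $C([0,T];C(\bar\Omega))$. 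Writing $u=y|_{\omega_1}$, $v=y|_{\omega_2}$ and $z(t)=y(t,0)$, the triple $(u,v,z)$ then lies in $\W$ for a.e.\ $t$, so the interface and Dirichlet conditions of \eqref{limit sys} hold automatically.

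Next I would analyze the wall, which carries the only terms of \eqref{approx weak form} lacking an obvious counterpart in \eqref{limit weak form}. From $\|\ye(t)\|_{\e\W}\le M_2$ one obtains the uniform bound $|\ye(t,x)-\ye(t,x')|\le C|x-x'|^{1/2}$ in $t$ and $\ep$, whence the average $\tfrac1{2\ep}\int_{\oepm}\ze(t,\cdot)\,dx$, the endpoint values $\ze(t,\pm\ep)$, and the traces $\ue(t,-\ep)=\ze(t,-\ep)$ and $\ve(t,\ep)=\ze(t,\ep)$ all differ from $\ye(t,0)$ by $O(\sqrt\ep)$, uniformly in $t$. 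Combined with $\ye(\cdot,0)\to y(\cdot,0)=z$ in $C([0,T])$ from the first step, this gives $\tfrac1{2\ep}\int_{\oepm}\ze(t,\cdot)\,dx\to z(t)$ in $C([0,T])$ for every $T$; the same estimate at $t=0$ together with the third line of \eqref{weak convergence of initial data} forces $z_0=z(0)$ and identifies $y(0)|_{\omega_i}$ with $u_0$ and $v_0$.

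Then I would pass to the limit in \eqref{approx weak form} term by term. Fixing $\varphi\in C^1_0([0,\infty)\times\Omega)$ with time support in $[0,T]$: the integrals over the mismatch sets $(-\ep,0)$ and $(0,\ep)$ are $O(\sqrt\ep)$ by Cauchy--Schwarz and the uniform bounds, so the integrals over $\oep{i}$ may be replaced by integrals over $\omega_i$ with vanishing error, and these converge to the corresponding terms of \eqref{limit weak form} by testing $\ye\weakstar y$ and $\ye'\weakstar y'$ against $\dot\varphi,\varphi'\in L^1([0,T];L^2(\Omega))$; the wall mass terms equal, up to $O(\ep)$, the quantities $\int_0^\infty\big(\tfrac{c}{2\ep}\int_{\oepm}\ze\,dx\big)\dot\varphi(t,0)\,dt$ and $\big(\tfrac{c}{2\ep}\int_{\oepm}\zeo\,dx\big)\varphi(0,0)$, which converge to $\int_0^\infty cz\,\dot\varphi(t,0)\,dt$ and $cz_0\varphi(0,0)$ by the previous step; and the wall stiffness term $\int_0^\infty\int_{\oepm}k\ze'\varphi'\,dx\,dt$ is $O(\sqrt\ep)$ by Cauchy--Schwarz and $|\oepm|\to0$, matching its absence in \eqref{limit weak form}. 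This shows $(u,v,z)$ satisfies \eqref{limit weak form} with initial data $(u_0,v_0,z_0)$. Finally, by the well-posedness of \eqref{limit sys} established in Section \ref{subsec: limit problem}, the weak solution with data $y^0=(u_0,v_0,z_0)\in\hi$ is unique, so $y$ must be that solution; and since every subsequence of $\{\ye\}$ has a further subsequence converging weak-$*$ to this same $y$, the whole family converges, $\ye\weakstar y$ in $L^\infty([0,\infty);H^1_0(\Omega))$.

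The hard part will be the wall analysis: showing that the singular density $1/2\ep$ concentrates, in the limit, into a point mass whose value is exactly the common interface temperature $z(t)=u(t,0)=v(t,0)$. This requires using simultaneously the $\e\W$-bound (to flatten $\ze$ across the thin wall), the interface continuity conditions built into $\e\W$, and the continuity of point evaluation on $H^1_0(\Omega)$ in one dimension together with the compactness from the first step; keeping track of the $O(\sqrt\ep)$ errors coming from the mismatch between $\oep{i}$ and $\omega_i$, and ensuring the trace identifications hold uniformly enough in $t$ to survive the time integration in \eqref{approx weak form}, is the main technical burden.
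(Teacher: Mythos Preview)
Your argument is correct and reaches the same conclusion, but the route is genuinely different from the paper's. The paper works entirely with weak-$*$ compactness: it extracts weak-$*$ limits of $\chi_{\oep{i}}\ue,\chi_{\oep{i}}\ve$ in $L^\infty(0,\infty;\vartheta_{\omega_i})$ and, separately, defines the scalar function $g_\ep(t)=\tfrac1{2\ep}\int_{\oepm}\ze(t,\cdot)\,dx$, bounds it in $L^\infty(0,\infty)$ via H\"older and \eqref{cond: initial data unif bound 1}, and extracts a weak-$*$ limit $z\in L^\infty(0,\infty)$ by Banach--Alaoglu. The nine terms of \eqref{approx weak form} are then handled one at a time using only weak convergence together with the continuity of $\varphi$, $\dot\varphi$, $\varphi'$; the wall terms are split by adding and subtracting $\dot\varphi(t,0)$ (resp.\ $\varphi(0,0)$) and estimated with H\"older. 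No time-derivative estimate, no Aubin--Lions, and no strong convergence are invoked.

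You instead spend one extra line to derive $\tfrac{d}{dt}\tfrac12\|\ye\|^2_{\e\W}=-\|\dot\ye\|^2_{\hie}$ and integrate it, which yields a uniform $L^2(0,\infty;L^2(\Omega))$ bound on $\dot\ye$ (the $1/2\ep$ weight only helps here). Aubin--Lions--Simon then upgrades the weak-$*$ limit to strong convergence in $C([0,T];L^2(\Omega))$, and interpolation with the $H^1_0$ bound gives $C([0,T];C(\bar\Omega))$. This makes the wall analysis and the identification $z(t)=y(t,0)=u(t,0)=v(t,0)$ essentially automatic, whereas the paper leaves the latter identification implicit in the weak form. The trade-off: your argument imports slightly heavier compactness machinery but is more transparent about why the limit lands in $\W$ and why the averaged wall temperature converges to the point value; the paper's argument is more elementary but has to work harder on each wall term and is less explicit about the trace compatibility of the limit.
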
 
\begin{proof}
From Lemmas \ref{lem: approx energy bounded} and \ref{lem: bndd space}, the initial energy provides a uniform bound for the solutions $\ye\in L^\infty\big([0,\infty;H^1_0(\Omega)\big)$. 
We can then extract a subsequence of solutions (which is still denoted by the index $\ep$) such that
\begin{align}
	\label{u weak convergence}
	&\chi_{\oep{1}}\ue \weakstar u \text{ in } 
		L^\infty(0,\infty;L^2(\omega_1))\cap L^\infty(0,\infty;\vartheta_{\omega_1})\\
			\label{v weak convergence}
	&\chi_{\oep{2}}\ve \weakstar v \text{ in }
		L^\infty(0,\infty;L^2(\omega_2))\cap L^\infty(0,\infty;\vartheta_{\omega_2}).
\end{align}
Next, observe that $g_{\ep}(t):=\frac{1}{2\ep}\<1,\ze(t)>_{\ep}$ defines a function on $[0,\infty)$.
Applying Holder's inequality we see that $|g_{\ep}(t)|\leq \|\ze\|_{\ep} /\sqrt{2\ep}$.
By condition \eqref{cond: initial data unif bound 1} we have that $\{g_{\ep}\}_{\ep>0}$ is a uniformly bounded sequence in $L^\infty(0,\infty)$. 
Invoking the Banach-Alaoglu Theorem we can extract a subsequence of $z_{\ep}$ (still denoted with the index $\ep$) and find $z \in L^\infty(0,\infty)$ such that 
\eql{convergence of g}{
	g_{\ep} \weakstar z \text{ in } L^\infty(0,\infty)
}
as $\ep\rightarrow 0$.
We now pass to the limit in each of the nine terms in the characterization \eqref{approx weak form} of weak solutions of the $\ep$-problem with $\varphi\in C^1_0([0,\infty)\times\Omega)$.
Since $\varphi(0,\cdot)\in C^1_0(\Omega)$ it follows from assumption \eqref{weak convergence of initial data} on the initial data $\yeo$ that 
\eq{
	&\int_{\oep{1}}\ueo\varphi(0,x)~dx=\int_{\omega_1}\chi_{\oep{1}}\ueo\varphi(0,x)~dx
		\rightarrow \int_{\omega_1}u^0\varphi(0,x)~dx,\\
	&\int_{\oep{2}}\veo\varphi(0,x)~dx=\int_{\omega_2}\chi_{\oep{2}}\veo\varphi(0,x)~dx
		\rightarrow \int_{\omega_2}v^0\varphi(0,x)~dx.
}
Next we claim that $\int_{\oepm}\frac{1}{2\ep}\zeo\varphi(0,x)~dx\rightarrow z^0\varphi(0,0)$. 
By adding and subtracting the term $z^0\varphi(0,x)$ and applying the triangle inequality we find that
\eq{
	\left|\int_{\oepm}\frac{1}{2\ep}\zeo\varphi(0,x)~dx-z^0\varphi(0,0)\right|
	&\leq\max_{x\in\oepm}|\varphi(0,x)|\left|\frac{1}{2\ep} \int_{\oepm}\zeo-z^0~dx\right|
		+|z^0|\frac{1}{2\ep} \int_{\oepm}|\varphi(0,x)-\varphi(0,0)|~dx.
}
The first term in the right hand side above tends to zero from \eqref{weak convergence of initial data} and the boundedness of $\varphi(0,\cdot)$ on $\oepm$.
The second term tends to zero as well by the continuity of $\varphi(0,x)$.

From \eqref{u weak convergence} and \eqref{v weak convergence} we have that in particular for $\dot\varphi\in C\big([0,\infty)\times \Omega\big)$ that
\eq{
	\int^\infty_0\int_{\oep{1}} \ue\dot\varphi\dx \dt\rightarrow  \int^\infty_0\int_{\omega_1} u\dot\varphi(t,x)\dx \dt,\\
	\int^\infty_0\int_{\oep{2}} \ve\dot\varphi\dx \dt\rightarrow  \int^\infty_0\int_{\omega_2} v\dot\varphi(t,x)\dx \dt.
}

Next we want to show that 
\eql{sixth term convergence}{
	\int^\infty_0\frac{1}{2\ep}\int_{\oepm}\ze(t,x)\dot\varphi(t,x)\dx \dt 
	\rightarrow \int^\infty_0z(t)\dot\varphi(t,0) \dt.
}
Observe that
\eq{
	\Bigg| \int^\infty_0\frac{1}{2\ep}\int_{\oepm}&\ze(t,x)\dot\varphi(t,x)\dx \dt -  \int^\infty_0z(t)\dot\varphi(t,0)\dx \dt\Bigg|\\
	&=
	\left| \int^\infty_0\frac{1}{2\ep} \int_{\oepm} \ze\dot\varphi(t,x) - z\dot\varphi(t,0)\dx \dt\right|\\
	&\leq
	\left| \int^\infty_0\frac{1}{2\ep}  \int_{\oepm}\ze(\dot\varphi(t,x) -\dot\varphi(t,0))\dx \dt\right|
	+\left| \int^\infty_0\frac{1}{2\ep} \int_{\oepm}(\ze- z)\dot\varphi(t,0)\dx \dt\right|.
}
The last term tends to zero by \eqref{convergence of g}. 
Regarding the first term, observe that applying H\"older's inequality we have
\eq{
	\left| \int^\infty_0\frac{1}{2\ep}  \int_{\oepm}\ze(\dot\varphi(t,x) -\dot\varphi(t,0))\dx \dt\right|
	&\leq\int^\infty_0\frac{1}{2\ep}  \int_{\oepm}\left| \ze\right|\left|\dot\varphi(t,x) -\dot\varphi(t,0)\right|\dx \dt\\
	&\leq \int^\infty_0\frac{1}{\sqrt{2\ep}}\|\ze\|_\ep\frac{1}{\sqrt{2\ep}}\sqrt{\int_{\oepm}\left|\dot\varphi(t,x) -\dot\varphi(t,0)\right|\dx} \dt\\
	&\leq \int^\infty_0\frac{1}{\sqrt{2\ep}}\|\ze\|_\ep\sqrt{\frac{1}{2\ep}\int_{\oepm}\left|\dot\varphi(t,x) -\dot\varphi(t,0)\right|\dx} \dt.
}
Note that $\|\ze\|_\ep/\sqrt{2\ep}$ is bounded by condition \eqref{cond: initial data unif bound 1} and by the continuity of $\dot\varphi$, we have that the above tends to zero as $\ep\rightarrow0$.
Since $\varphi'\in C([0,\infty)\times \Omega)$ we have from \eqref{u weak convergence} and \eqref{v weak convergence} that 
\eq{
	\int^\infty_0\int_{\oep{1}} \ue'\varphi'\dx \dt
	=\int^\infty_0\int_{\omega_1} \chi_{\oep{1}}\ue'\varphi'\dx \dt
	\rightarrow  \int^\infty_0\int_{\omega_1} u'\varphi'\dx \dt\\
	\int^\infty_0\int_{\oep{2}} \ve'\varphi'\dx \dt
	=\int^\infty_0\int_{\omega_2} \chi_{\oep{2}}\ve'\varphi'\dx \dt
	\rightarrow  \int^\infty_0\int_{\omega_2} v'\varphi'\dx \dt.
}
Finally, the term $\int^\infty_0\int_{\oepm}k\ze'\varphi'~dx dt$ tends to zero as $\ep\rightarrow0$ since $\ze(t,\cdot)\in H^1(\oepm)$ and $\varphi'\in C([0,\infty)\times \Omega)$.

From the above discussion we now have that there exists a  convergent subsequence of $\ye$ in the weak star sense, whose limit satisfies the equation \eqref{limit weak form}. 
Since the limiting system has a unique weak solution, it follows that the convergence holds for the whole sequence $\{\ye\}_{\ep>0}$. 
Therefore, we have shown that the limiting system \eqref{limit sys} can be approximated with the sequence of $\ep$ dependent problems \eqref{approx sys} as needed. 
\end{proof}




\begin{thebibliography}{99}

\bibitem{BePraDel} A.\ Bensoussan, G.~D.\ Prato, M.~C.\ Delfour and S.~K.\ Mitter,
{\emph{Representation
and Control of Infinite Dimensional Systems}}, 
Birkh\"{a}user,\ vol.1\ 1992.

\bibitem{CZbeam} C.\ Castro and E.\ Zuazua,
{\emph{Boundary controllability of a hybrid system consisting in two flexible beams connected by a point mass}}, 
SIAM J.\ Control Optim.,\ 36:1576-1595,\ (1998).

\bibitem{Cwavepoint} C.\ Castro,
{\emph{Asymptotic analysis and control of a hybrid system composed by two vibrating strings connected by a point mass}},
ESAIM Control Optim.\ Calc.\  Var.\ 2\ (1997)\ 231--280.

\bibitem{CZeulerbeam} C.\ Castro and E.\ Zuazua,
{\emph{Exact boundary controllability of two Euler-Bernoulli beams connected by a point mass}}, 
Math.\ Comput.\ Modeling,\ 32 (2000),\ pp. 955-969.

\bibitem{CoMo} F.\ Conrad and O.\ Morgul,
{\emph{On the stabilization of a flexible beam with a tip mass}}, 
SIAM,\ vol.\ 96,\ No. 6,\ pp. 1962-1986,\ (1998).

\bibitem{Guo} B.~Z.\ Guo,
{\emph{Riesz basis approach to the stabilization of a flexible beam with a tip mass}}, 
SIAM J.\ Control Optim.,\ 39 (2001),\ pp.\ 1736-1747.

\bibitem{GuoIvanov} B.~Z.\ Guo and  S.\ A.\ Ivanov,
{\emph{Boundary Controllability and Observability of a One-Dimensional Nonuniform SCOLE System}}, 
J. Optim.\ Theory Appl.,\ 127 (2005),\ pp.\ 89-108.

\bibitem{HZwavepoint} S.\ Hansen and E.\ Zuazua, 
{\emph{Exact controllability and stabilization of a vibrating string with an interior point mass}}, 
SIAM\ J.~Cont.\ Optim.\ {\textbf{33}}(5),\ (1995)\ 1357--1391. 

\bibitem{LuGuMo} Z.~H.\ Luo, B.~Z.\ Guo and O.\ Morg\"ul,
{\emph{Stability and Stabilization of Infinite Dimensional Systems with Applications,}}, 
Communications and Control Engineering,\ Springer-Verlag, London, 1999.

\bibitem{LMbeam} W.\ Littman and L.\ Markus,
{\emph{Exact boundary controllability of a hybrid system of elasticity}}, 
Arch.\ Rational Mech.\ Anal.,\ 103 (1988),\ pp.\ 193-236.

\bibitem{LitTay} W.\ Littman and S.\ W.\ Taylor,
{\emph{Boundary feedback stabilization of a vibrating string with an interior point mass}}, 
Nonlinear Problems in Mathematical Physics and Related Topics I,\ in:\ Int.\ Math.\ Ser.,\ vol 1,\ 2002,\ pp.\ 271287.

\bibitem{MoRaoCo} O.\ Morgul,~ B.~P.\ Rao  and F.\ Conrad,
{\emph{On the stabilization of a cable with a tip mass}}, 
IEEE Trans.\ Automat.\ Control.\ 39 (1994),\ pp. 2140-2145.

\bibitem{Ozisik} M.~N.\ Ozisik,
{\emph{Heat Conduction}}, 
John\ Wiley~\&~Sons,\ vol.1\ 1980.

\bibitem{Pazy} A.\ Pazy,
{\emph{Semigroups of Linear Operators and Applications to Partial Differential Equations}},
vol.\ 44 of Appl.\ Math.\ Sci.,\ Springer-Verlag,\ New York,\ 1983.

\bibitem{Rudin} W.\ Rudin,
{\emph{Functional Analysis}},
McGraw-Hill, New York, 1973.

\bibitem{ZhaoWeiss} X.\ Zhao and G.\ Weiss,
{\emph{Well-posedness, regularity and exact controllability of the SCOLE model}}, 
Math.\ Control Signals Syst.,\ 22,\ (2010),\ pp.\ 91-127.



\end{thebibliography}
\end{document}